\newtheorem{theorem}{Theorem}
\newtheorem{lemma}[theorem]{Lemma}
\newtheorem{corollary}[theorem]{Corollary}
\newtheorem{remark}[theorem]{Remark}
\theoremstyle{definition}
\newtheorem{definition}[theorem]{Definition}
\def\<{{\langle}}
\def\>{{\rangle}}
\begin{document}

\title{A Littlewood-Richardson Rule for Dual Stable Grothendieck Polynomials}
\author{Pavel Galashin}

\date{\today}
\address{Department of Mathematics, Massachusetts Institute of Technology, Cambridge, MA, 02139}
\email{galashin@mit.edu}

\maketitle

\begin{abstract}
For a given skew shape, we build a crystal graph on the set of all reverse plane partitions that have this shape. As a consequence, we get a simple extension of the Littlewood-Richardson rule for the expansion of the corresponding dual stable Grothendieck polynomial in terms of Schur polynomials.
\end{abstract}

\maketitle

\def\l{{\lambda}}
\def\m{{\mu}}
\def\lm{{\l/\m}}
\def\R{{\mathcal{R}}}

\section{Introduction}
Dual stable Grothendieck polynomials $g_\lm(x)$ were introduced in \cite{LP2007}. They are Hopf-dual to the stable Grothendieck polynomials, which represent some classes of the structure sheaves of Schubert varieties. The connection of stable and dual stable Grothendieck polynomials with the $K$-theory of the Grassmannian has been discussed in various papers including \cite{LS1982,FK1996,B2002}, and \cite{LP2007}. The paper \cite{LP2007} gives an explicit combinatorial rule for the coefficients of polynomials $g_\l(x)$ in the basis of Schur polynomials $s_\m(x)$. We extend this result to the case of $g_\lm(x)$ for a skew shape $\lm$, and give a different rule (for straight shapes, it coincides with the rule of \cite{B2012}) that provides the same coefficients for straight shapes and extends the classical Littlewood-Richardson rule.  We do this by constructing a crystal graph (see \cite{K1995}) on the set $\R(\lm)$ of all reverse plane partitions of shape $\lm$ with entries not exceeding a fixed number $m>0$.

\subsection{Main results}
To a reverse plane partition $T\in \R(\lm)$ we assign a reading word $r(T)$ in the following way: ignore each entry of $T$ that is equal to the entry directly below it; then read all the remaining entries in the left-to-right bottom-to-top order (the usual reading order for the Young tableaux). After that we define a family of operators $e_1,e_2,\dots,e_{m-1}$ on the set $\R(\lm)$ which are essentially the usual parenthesation operators applied to the reading word (see \cite{LS1978}). 
\begin{theorem}
 \label{thm:crystal}
 The operators $e_1,e_2,\dots, e_{m-1}$ satisfy the crystal axioms (which can be found in \cite{K1995} and will also be discussed in the sequel). 
\end{theorem}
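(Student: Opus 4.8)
The plan is to realize the operators $e_i$ — together with the lowering operators $f_i$ defined by the symmetric parenthesization rule — as genuine maps $\R(\lm)\to\R(\lm)\sqcup\{0\}$, and then to read off the crystal axioms from the well-understood crystal structure on words. First I would set $\mathrm{wt}(T)=(a_1(T),\dots,a_m(T))$, where $a_i(T)$ is the number of columns of $T$ containing $i$, and define $\varepsilon_i(T)$ (resp.\ $\varphi_i(T)$) to be the number of unparenthesized letters $i+1$ (resp.\ $i$) in $r(T)$. The defining rule for $r(T)$ — ignore an entry whenever it equals the entry directly below it — keeps exactly one representative of each maximal vertical run, hence the content of $r(T)$ equals $\mathrm{wt}(T)$. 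Consequently the linear relation $\varphi_i(T)-\varepsilon_i(T)=\langle\mathrm{wt}(T),\alpha_i^\vee\rangle$ holds already at the level of the word and transfers verbatim to $T$, disposing of the weight/string-length axiom.

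The technical heart is a \emph{well-definedness} lemma: if $\varepsilon_i(T)>0$, then the letter of $r(T)$ selected by $e_i$ comes from a cell whose entire vertical run of $(i+1)$'s can be recolored to $i$ so as to produce another element $e_iT\in\R(\lm)$, and this recoloring changes $\mathrm{wt}$ by exactly $\alpha_i$. The subtlety is that a naive change of a single kept cell destroys weak monotonicity down the column, so the operator must act on the whole maximal run; and if the selected column already contains an $i$, recoloring the run would \emph{merge} two runs, leaving the number of columns containing $i$ unchanged and thus altering $\mathrm{wt}$ incorrectly. I would therefore prove, by a column-by-column analysis of the $i/(i+1)$ bracket sequence, that such a ``dangerous'' configuration — an $i$ lying directly above the $(i+1)$-run, or a conflicting horizontal neighbor equal to $i+1$ — always contributes a \emph{matched} pair, and hence is never the letter selected by $e_i$. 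This is the step I expect to be the main obstacle: one must show that the contribution of a single column to the bracket sequence cancels internally precisely on those entries whose recoloring would be illegal or weight-violating, so that only the ``safe'' runs survive to be acted upon.

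With well-definedness in hand, I would prove the compatibility statement $r(e_iT)=e_i\bigl(r(T)\bigr)$, where the right-hand side is the standard Lascoux--Sch\"utzenberger operator on words: recoloring the selected run changes the reading word in exactly one letter, from $i+1$ to $i$, and leaves the relative reading order of all surviving letters intact. Granting this, the remaining Kashiwara axioms become formal consequences of the corresponding facts for the crystal of words, that is, the tensor power of the standard $\mathfrak{gl}_m$-crystal: the bracketings of $r(T)$ and $r(e_iT)$ differ in the predicted way, so $\varepsilon_i(e_iT)=\varepsilon_i(T)-1$, $\varphi_i(e_iT)=\varphi_i(T)+1$, and $\mathrm{wt}(e_iT)=\mathrm{wt}(T)+\alpha_i$, with the analogous statements for $f_i$; and since the word operators satisfy $e_iw=w'\iff f_iw'=w$, the same holds for the RPP operators once both sides are known to be well defined, giving the invertibility axiom $e_iT=T'\iff f_iT'=T$.

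Finally, I would assemble these pieces into a verification of the crystal axioms as stated in \cite{K1995}, taking $\varepsilon_i(T)=\max\{n:e_i^nT\neq 0\}$ and $\varphi_i(T)=\max\{n:f_i^nT\neq 0\}$, which agree with the parenthesization counts above. The only genuinely new input beyond the word crystal is the pair of geometric facts — recoloring stays inside $\R(\lm)$, and it moves $\mathrm{wt}$ by $\alpha_i$ — so once the well-definedness lemma is established, the axioms follow with little further work.
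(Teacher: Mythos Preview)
Your plan has a genuine gap: the well-definedness lemma you isolate as ``the technical heart'' is false as stated. The claim that a dangerous configuration --- an $i$ sitting directly above the selected $(i{+}1)$-run --- is always matched in $r(T)$ does not hold. Take the $2\times 2$ reverse plane partition
\[
T=\young(12,22),\qquad r(T)=2\,2\,1.
\]
The bracket word is $(\,(\,)$, so the leftmost unmatched $i{+}1$ in $r(T)$ is the entry in cell $(2,1)$, which lies in a \emph{mixed} column (there is a $1$ directly above it). Recoloring that run yields $\young(12,12)$, which is an RPP but has weight $(1,1)$: the weight moved by $-e_2$, not by $\alpha_1=e_1-e_2$, and the reading word dropped from length $3$ to length $2$. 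So neither ``recoloring changes $\mathrm{wt}$ by $\alpha_i$'' nor ``recoloring changes $r(T)$ in exactly one letter'' survives, and in particular your operator does not coincide with the paper's $e_i$, whose value here is $\young(11,12)$ with weight $(2,1)$ and reading word $1\,2\,1=E_1(2\,2\,1)$.

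The paper's proof avoids this obstruction in two ways you are missing. First, $e_i$ and $f_i$ are \emph{not} defined by recoloring a single run: they are defined by parenthesizing the pure \emph{columns}, recoloring the selected pure column, and then running a descent-resolution algorithm on the resulting benign tableau to restore the RPP property. Second --- and this is the crucial asymmetry the paper's Remark flags explicitly --- the intertwining $r(\,\cdot\,)\circ e_i=E_i\circ r(\,\cdot\,)$ is \emph{not} proved directly; the direct argument only works for $f_i$. For $f_i$ one shows (i) the only descents that appear are of type (M1), and those resolution steps preserve $r$, and (ii) the column-matching and the reading-word matching agree on unmatched $i$'s. This gives $r(f_iT)=F_i(r(T))$, and the statement for $e_i$ follows because $e_i,f_i$ and $E_i,F_i$ are mutual partial inverses. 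Combined with an injectivity lemma ($T$ is determined by $r(T)$ together with the height vector $h(T)$, which the operators preserve), each connected component of $\R(\lm)$ embeds as a connected component of the word crystal $[m]^n$, whence the crystal axioms. Your outline could be repaired by importing the column definition and the descent-resolution algorithm, and then arguing on $f_i$ rather than $e_i$; but as written, the core lemma fails on the smallest nontrivial example.
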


Therefore we get a crystal graph structure on $\R(\lm)$. As a direct application of that (see \cite{K1995}), we get a Littlewood-Richardson rule for the reverse plane partitions:

\begin{corollary}
 \label{cor:LR}
 The dual stable Grothendieck polynomial $g_\lm(x)$ is expanded in terms of Schur polynomials $s_\nu(x)$ as follows:
 $$ g_\lm(x)=\sum_\nu h_\lm^\nu s_\nu(x),$$
 where the sum is over all Young diagrams $\nu$, and the coefficient $h_\lm^\nu$ is equal to the number of reverse plane partitions $T$ of shape $\lm$ and weight $\nu$ such that the reading word $r(T)$ is a lattice word.
\end{corollary}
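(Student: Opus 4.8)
The plan is to deduce Corollary~\ref{cor:LR} from Theorem~\ref{thm:crystal} by invoking the standard representation-theoretic consequences of the crystal axioms. The first step is to recognize $g_\lm(x)$ as the character of the crystal $\R(\lm)$. By the definition of the dual stable Grothendieck polynomial as a generating function over reverse plane partitions, one has $g_\lm(x)=\sum_{T\in\R(\lm)} x^{\mathrm{wt}(T)}$, where $\mathrm{wt}(T)_i$ counts the number of columns of $T$ containing the entry $i$. I would first check that this column-counting weight is precisely the content of the reading word $r(T)$: since $r(T)$ is formed by discarding every entry equal to the one directly below it, each column contributes each of its distinct values exactly once, so the multiplicity of the letter $i$ in $r(T)$ equals $\mathrm{wt}(T)_i$. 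This identifies the weight tracked by the crystal with the monomial exponent in $g_\lm(x)$.

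The second step invokes the general theory of type $A_{m-1}$ crystals. By Theorem~\ref{thm:crystal}, the operators $e_1,\dots,e_{m-1}$, together with their partial inverses $f_i$, equip $\R(\lm)$ with the structure of a $\mathfrak{gl}_m$-crystal. By Kashiwara's theory (see \cite{K1995}), such a crystal decomposes as a disjoint union of its connected components, and each connected component is isomorphic to the highest-weight crystal $B(\nu)$ of the irreducible representation of highest weight $\nu$, whose character is the Schur polynomial $s_\nu(x)$. Summing the characters over all components then yields $g_\lm(x)=\sum_\nu c_\nu\, s_\nu(x)$, where $c_\nu$ is the number of connected components isomorphic to $B(\nu)$, equivalently the number of highest-weight elements of weight $\nu$.

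It remains to identify the highest-weight elements, that is, those $T\in\R(\lm)$ with $e_iT=0$ for all $i$, with the reverse plane partitions whose reading word is a lattice word. Since each $e_i$ acts as the Lascoux--Schützenberger parenthesization operator on $r(T)$, the condition $e_iT=0$ says that in the $\{i,i+1\}$-subword of $r(T)$ every letter $i+1$ is matched, leaving no unpaired $i+1$. Requiring this for all $i$ is exactly the statement that every prefix of $r(T)$ contains at least as many $i$'s as $(i+1)$'s, i.e.\ that $r(T)$ is a lattice word. Combining this with the previous step gives $c_\nu=h_\lm^\nu$ and completes the proof.

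The hard part will be the bookkeeping in the first step: one must verify carefully that the nonstandard reading convention (ignoring an entry when it equals the entry directly below) makes the content of $r(T)$ agree with the column-counting weight that defines $g_\lm$, and that the crystal operators preserve the shape $\lm$ while acting on $r(T)$ exactly as the classical operators act on a word. Once the weight and the highest-weight characterization are pinned down, the decomposition into Schur polynomials follows formally from the crystal axioms.
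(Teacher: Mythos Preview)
Your proposal is correct and follows essentially the same route as the paper: identify $g_\lm$ as the character of the crystal, decompose into connected components whose characters are Schur polynomials, and read off the highest-weight elements as those $T$ with $r(T)$ a lattice word. The only substantive difference is that the paper avoids invoking Kashiwara's general theory and instead proves the needed facts about the string crystal directly via Knuth equivalence (Lemma~\ref{lemma:crystal}); the ``hard part'' you flag---that the $e_i$ on reverse plane partitions intertwine via $r$ with the classical $E_i$ on words---is exactly the paper's Lemma~\ref{lemma:intertw}, and it is indeed the crux of the whole argument rather than a bookkeeping detail (note that $e_i$ is defined via pure/mixed \emph{columns}, not via $r(T)$, so this compatibility is genuinely nontrivial).
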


We also give a self-contained proof of this Corollary without using the theory of crystal graphs. Note that the highest degree homogeneous component of $g_\lm(x)$ is the skew-Schur polynomial $s_\lm(x)$, so Corollary \ref{cor:LR} is an extension of the Littlewood-Richardson rule for skew-Schur polynomials.

\begin{remark}
\def\ceq{{\mathrm{ceq}}}
 In \cite{GGL2015}, the following refinement $\tilde g_\lm(x;t)$ of $g_\lm(x)$ was introduced. For a reverse plane partition $T\in\R(\lm)$ let $\ceq(T):=(c_1,c_2,\dots)$ be a weak composition whose $i$-th entry $c_i$ is equal to the number of columns $j$ such that the boxes $(i,j)$ and $(i+1,j)$ both belong to $\lm$ and the entries of $T$ in these boxes are the same. Let $t=(t_1,t_2,\dots)$ be a vector of indeterminates, and put $t^{\ceq(T)}:=t_1^{c_1}t_2^{c_2}\dots$. Then the bounded degree power series $\tilde g_\lm(x;t)$ is defined as a sum over all reverse plane partitions $T$ of shape $\lm$ of $x^Tt^{\ceq(T)}$. It will be clear later that the operators $e_1,e_2,\dots,e_{m-1}$ preserve this $\ceq$-statistic, therefore, Corollary \ref{cor:LR} also admits a refinement:
 $$\tilde g_\lm(x;t)=\sum_\alpha t^\alpha \sum_\nu h_\lm^{\nu,\alpha} s_\nu(x),$$
 where the first sum is over all weak compositions $\alpha$, and $h_\lm^{\nu,\alpha}$ counts the number of reverse plane partitions $T$ of shape $\lm$ and weight $\nu$ such that the reading word $r(T)$ is a lattice word with an extra property that $\ceq(T)=\alpha$.
\end{remark}

\subsection{Previous research}
There already is a combinatorial rule for the coefficients $h_\lm^\nu$ in \cite{LP2007} for the case when $\m=\emptyset$ and $\lm=\l$ is a straight shape. Namely, $h_\l^\nu$ equals to the number $f_\l^\nu$ of \textit{elegant fillings} of $\l/\nu$, that is, the number of semi-standard Young tableaux $T$ of shape $\l/\nu$ such that all entries in the $i$-th row of $T$ are strictly less than $i$. This formula is Hopf-dual to the corresponding formula for stable Grothendieck polynomials that appeared earlier in \cite[Theorem 2.16]{L2000}, which implies that the dual stable Grothendieck polynomials are indeed Hopf-dual to the usual stable Grothendieck polynomials. To prove this rule, Lam and Pylyavskyy in \cite{LP2007} construct a weight preserving bijection between reverse plane partitions of shape $\l$ and pairs $(S,U)$, where $S$ is a semi-standard Young tableau of some shape $\mu$ and $U$ is an elegant filling of $\l/\mu$. Following this bijection one can deduce that $T$ is a reverse plane partition of shape $\l$ and weight $\nu$ whose reading word is a lattice word if and only if it corresponds to a pair $(S,U)$ such that $S$ is the filling of the shape $\nu$ with all entries in the $i$-th row equal to $i$, and $U$ is an elegant tableau of shape $\l/\nu$. Therefore the bijection from \cite{LP2007} restricted to the reverse plane partitions whose reading word is a lattice word proves the equality of the numbers $h_\l^\nu$ and $f_\l^\nu$. 

For straight shapes, a combinatorial rule that involved the coefficients $h_\l^\nu$ instead of $f_\l^\nu$ was given in \cite[Proposition 5.3]{B2012} together with bijections that also show the equality of the numbers $h_\l^\nu$ and $f_\l^\nu$. 

\subsection{The structure of the paper}
The rest of this section contains some background information about dual stable Grothendieck polynomials, crystal graphs, and introduction to the operators $e_i$ that occur in the statement of Theorem \ref{thm:crystal}. 

The second section is dedicated to the proof of Theorem \ref{thm:crystal} and Corollary \ref{cor:LR} by exploring further properties and connections between the reading words of reverse plane partitions and the action of operators $e_i$.  

\subsection{Preliminaries}
\subsubsection{Reverse plane partitions}

We follow the notations of \cite{LP2007}. Let $\lm$ be a skew shape and $m$ a positive integer. A \textit{reverse plane partition} $T$ of shape $\lm$ with entries in $[m]:=\{1,\dots,m\}$ is a tableau of this shape such that its entries do not exceed $m$ and weakly increase both in rows and in columns. For $i\in [m]$, by $T(i)$ we denote the number of columns of $T$ that contain $i$. To each reverse plane partition $T$ we attach a monomial $x^T=\Pi_{i\in[m]} x_i^{T(i)}$. For a skew shape $\lm$, define the dual stable Grothendieck polynomial $g_\lm(x_1,\dots,x_m)$ as the sum of weights of all reverse plane partitions $T$ of shape $\lm$ with entries in $[m]$:
$$g_\lm(x)=\sum_T x^T.$$
As it was shown in \cite{LP2007}, these polynomials are symmetric.

\subsubsection{Crystal graphs}
\label{subsubsection:crystal}
\def\S{{\mathcal{S}}}
Crystal graphs are important for representation theory of certain quantized universal enveloping algebras, and have been a fruitful topic of research for the past two decades. We give a brief adaptation of the crystal graph theory based on \cite{K1995,S2003,L1994} with a very low yet sufficient for the rest of this paper level of detail. 

A crystal graph $G$ can be viewed as a set $V$ of vertices together with a set $e_1,\dots,e_{m-1}:V\to V\cup \{0\}$ of operators that act on the vertices of $G$ and return either a vertex of $G$ or zero. In addition, these operators are required to satisfy a set of simple \textit{crystal axioms}. If they do, then they are called \textit{crystal operators}, and $G$ is called \textit{a crystal graph}.

Instead of providing the list of these axioms, we give an important example of a crystal graph, which is the only crystal graph that we will be interested in. Fix $n>0$. Let $\S:=[m]^n$ be the set of all strings of length $n$ in the alphabet $[m]$. For $s=(s_1,s_2,\dots,s_n)\in\S$, the weight $w(s)=(w_1(s),\dots,w_m(s))$ is defined as
$$w_i(s):=\# \{j\in[n]:s_j=i\}.$$ 
For $i\in [m-1]$ we define the operator $E_i:\S\to\S\cup\{0\}$. For $s:=(s_1,s_2,\dots, s_n)\in \S$ the value $E_i(s)$ is evaluated using the following algorithm:
\begin{enumerate}
 \item \label{step:ignore}Ignore all entries of $s$ other than the ones equal to $i$ or to $i+1$;
 \item \label{step:pair}Ignore all occurrences of $i+1$ immediately followed by $i$;
 \item \label{step:replace}After doing the previous step as many times as possible we obtain a string that consists of several $i$'s followed by several $i+1$'s. If there is at least one $i+1$, then $E_i$ replaces the leftmost $i+1$ by an $i$, and otherwise we set $E_i(s):=0$.
\end{enumerate}
In other words, $E_i$ labels each $i$ by a closing parenthesis, each $i+1$ by an opening parenthesis, and then it replaces the leftmost unmatched opening parenthesis by a closing one if there are any unmatched opening parentheses present. As an example, let $i=1,m=3,n=13$ and consider the following string $s:=(1,2,2,3,1,3,2,2,2,1,3,1,2)$. After step (\ref{step:ignore}) we ignore all $3$'s, so the string $s$ becomes $(1,2,2,*,1,*,2,2,2,1,*,1,2)$. Here the ignored entries are represented as stars. Next, we do step \ref{step:pair} as many times as needed, so our string is modified as follows:
\begin{eqnarray*}
   s&=&  (1,2,2,3,1,3,2,2,2,1,3,1,2)\\
   &\to& (1,2,2,*,1,*,2,2,2,1,*,1,2)\\
   &\to& (1,2,*,*,*,*,2,2,2,1,*,1,2)\\
   &\to& (1,2,*,*,*,*,2,2,*,*,*,1,2)\\
   &\to& (1,2,*,*,*,*,2,2,*,*,*,1,2)\\
   &\to& (1,2,*,*,*,*,2,*,*,*,*,*,2).
\end{eqnarray*}
Now we can easily calculate the $E_1$-orbit of $s$:
\begin{eqnarray*}
   E_1^0(s)&=&  (1,2,2,3,1,3,2,2,2,1,3,1,2)\\
   E_1^1(s)&=&  (1,\mathbf{1},2,3,1,3,2,2,2,1,3,1,2)\\
   E_1^2(s)&=&  (1,1,2,3,1,3,\mathbf{1},2,2,1,3,1,2)\\
   E_1^3(s)&=&  (1,1,2,3,1,3,1,2,2,1,3,1,\mathbf{1})\\
   E_1^4(s)&=&  0. 
\end{eqnarray*}

\def\Im{{\textrm{Im}}}
\def\Id{{\textrm{Id}}}

Similarly, define the operators $F_i$ to be the operators that replace the rightmost unmatched closing parenthesis by an opening one. The operators $E_i$ and $F_i$ are ``inverse to each other'' in the sense that for any two strings $u,v\in \S$, $E_i(u)=v$ if and only if $F_i(v)=u$. 

These operators satisfy the crystal axioms and therefore have a lot of nice properties, which we summarize in the following Lemma:

\begin{lemma}
\label{lemma:crystal}
\begin{enumerate}
 \item  Each connected component of the corresponding edge-colored graph has exactly one vertex $v\in \S$ such that for every $i\in [m-1]$, $E_i(v)=0$.
 \item This component is completely determined (up to an isomorphism of edge-colored graphs) by the weight $w(v)$, which is clearly a weakly decreasing sequence of integers.
 \item The sum of $x^{w(u)}$ over all vertices $u$ in this connected component is equal to the Schur polynomial $s_{w(v)}$.  
\end{enumerate}
\end{lemma}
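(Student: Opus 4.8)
The three assertions are precisely the standard structure theorems for the type $A_{m-1}$ tensor crystal $\mathcal{S}=[m]^n$, and my plan is to derive them from the Robinson--Schensted (RS) correspondence rather than by unwinding the general crystal axioms of \cite{K1995}. Recall that RS insertion assigns to each string $s\in\mathcal{S}$ a pair $(P(s),Q(s))$, where $P(s)$ is a semistandard insertion tableau with entries in $[m]$, $Q(s)$ is a standard recording tableau, both of a common shape $\lambda$, and the weight $w(s)$ equals the content of $P(s)$. The backbone of the argument is the compatibility of $E_i,F_i$ with insertion: whenever $E_i(s)\neq 0$ one has $Q(E_i(s))=Q(s)$, while $P(E_i(s))=\tilde{E}_i(P(s))$, where $\tilde{E}_i$ denotes the same parenthesization operator applied to the reading word of the tableau (and symmetrically for $F_i$). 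Granting this, the fiber $\{s:Q(s)=Q_0\}$ over a fixed recording tableau $Q_0$ of shape $\lambda$ is closed under all $E_i,F_i$, and the map $s\mapsto P(s)$ identifies it, weight-preservingly and as an $E_i$-edge-colored graph, with the set $\mathrm{SSYT}(\lambda,m)$ of semistandard tableaux of shape $\lambda$ and entries in $[m]$, equipped with the operators $\tilde{E}_i$.

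Proving this compatibility is the step I expect to be the main obstacle, and I would reduce it to a single-letter commutation lemma by induction on $n$. Writing $s=s'c$ with $c\in[m]$ the last letter, the tensor decomposition $\mathcal{S}=[m]^{n-1}\otimes[m]$ shows that the parenthesization rule defining $E_i$ is exactly the signature rule for a tensor product, so $E_i$ either acts inside $s'$ or flips the final letter $c$; on the other side, inserting $c$ into $P(s')$ appends a single box to $Q$. It then suffices to check that applying $E_i$ and RS-inserting $c$ commute, with the shape recorded identically in both orders so that $Q$ is unchanged. This comes down to a finite case analysis according to where the unique entry that $E_i$ modifies lies relative to the bumping route of $c$; the bookkeeping is routine but carries all of the real work. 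An equivalent and often cleaner reformulation handles the statement about $P$: one verifies that $E_i,F_i$ respect Knuth equivalence, hence induce well-defined operators $\tilde{E}_i,\tilde{F}_i$ on plactic classes (that is, on semistandard tableaux), because each elementary Knuth move rewrites the reading word only locally and so leaves the matching of $i$'s against $(i+1)$'s intact; the recording-tableau invariance $Q(E_i(s))=Q(s)$ is then the extra input supplied by the single-letter lemma.

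With compatibility established, the three claims follow quickly. First, $\mathrm{SSYT}(\lambda,m)$ is connected under $\tilde{E}_i,\tilde{F}_i$ and contains exactly one tableau annihilated by every $\tilde{E}_i$, namely the superstandard tableau $U_\lambda$ whose $i$-th row consists entirely of $i$'s: indeed any tableau of content $\lambda$ can be raised to $U_\lambda$ by repeatedly applying the $\tilde{E}_i$, and $U_\lambda$ is the unique semistandard tableau of shape $\lambda$ and content $\lambda$. Hence each fiber $\{s:Q(s)=Q_0\}$ is a single connected component containing exactly one vertex $v$ with $E_i(v)=0$ for all $i$, giving statement (1). The weight of that vertex is the content of $U_\lambda$, namely $\lambda$ itself, which is weakly decreasing and equals the shape of $Q_0$; since the component is isomorphic as an edge-colored graph to the fixed object $\mathrm{SSYT}(\lambda,m)$ carrying the operators $\tilde{E}_i$, it is determined up to isomorphism by $\lambda=w(v)$, which is statement (2). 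Finally, summing $x^{w(u)}$ over the component equals $\sum_{T\in\mathrm{SSYT}(\lambda,m)}x^{\mathrm{content}(T)}$, which is by definition the Schur polynomial $s_\lambda=s_{w(v)}$, proving (3). I would add that, should a self-contained treatment be unnecessary here, all three statements may instead be quoted directly from \cite{K1995,S2003,L1994} as immediate consequences of the crystal axioms.
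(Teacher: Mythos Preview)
Your proposal is correct, and its core is the same as the paper's: both arguments pass from $\mathcal{S}$ to plactic classes, identify the induced operators on semistandard tableaux with the same parenthesization rule, and then verify that $\mathrm{SSYT}(\lambda,m)$ is connected with unique highest weight $U_\lambda$. The difference is in packaging. Your primary plan invokes the full Robinson--Schensted correspondence and singles out the $Q$-invariance $Q(E_i(s))=Q(s)$ as the ``main obstacle,'' to be proved by a single-letter commutation lemma; this has the advantage of explicitly identifying the connected components as the fibers of $Q$. The paper instead takes exactly the route you describe as the ``equivalent and often cleaner reformulation'': it only checks that $E_i,F_i$ respect Knuth equivalence and that, applied to the reading word of a tableau, they return the reading word of a tableau of the same shape. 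This avoids tracking $Q$ or proving any insertion-commutation lemma, at the cost of leaving the identification of components slightly more implicit (one is using that the projection $s\mapsto P(s)$ is a local isomorphism of edge-colored graphs). The paper's connectivity step is also phrased a bit differently from yours: rather than raising an arbitrary tableau to $U_\lambda$ via $\tilde E_i$'s, it shows by induction on $k$ that every tableau whose rows $\geq k$ are superstandard is reachable from $U_\lambda$ by $F_i$'s.
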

Even though all of these properties follow from the fact that $E_i$ and $F_i$ satisfy crystal axioms, we prove them just to make the proof of Corollary \ref{cor:LR} self-contained. Note that a somewhat related proof can be found in \cite{RS1998}.
\begin{proof}
 Note that if the words $u,u'\in \S$ are Knuth equivalent (see \cite{K1970}), then the words $E_i(u)$ and $E_i(u')$ are Knuth equivalent (or both zero), and also the words $F_i(u)$ and $F_i(u')$ are Knuth equivalent (or both zero). And for each word $u\in\S$ there is exactly one word $u'\in\S$ which is Knuth equivalent to $u$ and such that it is a reading word of some semi-standard Young tableau $T$. But the operators $E_i$ and $F_i$ applied to the reading word of $T$ produce a reading word of some other tableau that has the same shape as $T$. 
 
 Now all three properties follow from the fact that any two semi-standard Young tableaux of the same straight shape can be obtained from one another by applying a sequence of operators $E_i$ and $F_i$. To show this, consider a tableau $T_0$ of shape $\l$ such that for every $j$, all of its entries in the $j$-th row are equal to $j$. Consider an integer $k\geq 1$, and let $T$ be a tableau of shape $\l$ such that for $j\geq k$, all entries of $T$ in the $j$-th row are equal to $j$ and such that for $j<k$, the entries of $T$ in the $j$-th row are less than or equal to $k$. Then we claim that such $T$ can be obtained from $T_0$ by applying a sequence of operators $F_i$ for different $i$'s. This statement is true for $k=1$ and can be easily proven by induction for all $k\geq 1$. 
\end{proof}

\subsection{The crystal operators for reverse plane partitions}
\subsubsection{The descent-resolution algorithm}
We describe the descent-resolution algorithm for reverse plane partitions from \cite{GGL2015}, where it was used in order to describe the analogue of the Bender-Knuth involution for reverse plane partitions. Let $\lm$ be a skew shape, and fix $i\in[m-1]$. For a tableau $T'$ of shape $\lm$ such that the entries of $T'$ are equal to either $i$ or $i+1$ and weakly increase in columns but not necessarily in rows, we say that a column of $T'$ is \textit{$i$-pure}, if it contains an $i$ but does not contain an $i+1$. Similarly, we call a column \textit{$i+1$-pure} if it contains an $i+1$ but does not contain an $i$. If a column contains both $i$ and $i+1$, then we call this column \textit{mixed}. 

\begin{definition}[see \cite{GGL2015}]
A tableau $T'$ is a \textit{benign tableau} if the entries of $T'$ weakly increase in columns and for every two mixed columns $A$ and $B$ ($A$ is to the left of $B$), the lowest $i$ in $A$ is not higher than the lowest $i$ in $B$. In other words, the vertical coordinates of the borders between $i$'s and $i+1$'s in mixed columns weakly increase from left to right (see Figure \ref{fig:benign}).
\end{definition}

\newcommand{\mmm}{\multicolumn{1}{|c|}{}}

\newcommand{\mm}[1]{\multicolumn{1}{|c|}{#1}}

\begin{figure}[here]
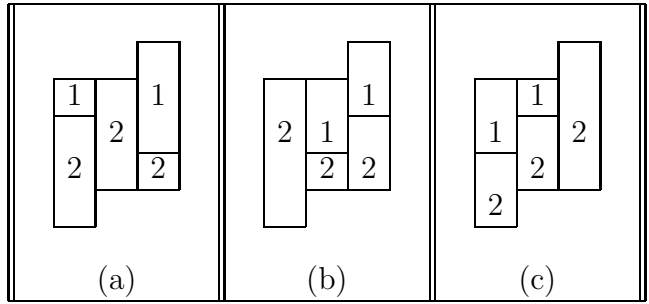

 \centering
\begin{tabular}{||ccc||ccc||ccc||}\hline
 & & & & & & & & \\
 &
\begin{tabular}{ccc}
\cline{3-3}           &               &    \mmm  \\ 
\cline{1-2}  \mm{1}   &   \mmm        &    \mm{1}\\ 
\cline{1-1}   \mmm    &    \mm{2}     &    \mmm  \\ 
\cline{3-3}    \mm{2} &     \mmm      &    \mm{2}\\ 
\cline{2-3}   \mmm    &               &          \\ 
\cline{1-1}\\
\end{tabular}& &
&
\begin{tabular}{ccc}
\cline{3-3}           &               &    \mmm  \\ 
\cline{1-2}  \mmm     &   \mmm        &    \mm{1}\\ 
\cline{3-3}   \mm{2}  &    \mm{1}     &    \mmm  \\ 
\cline{2-2}    \mmm   &     \mm{2}    &    \mm{2}\\ 
\cline{2-3}   \mmm    &               &          \\ 
\cline{1-1}\\
\end{tabular}& &
&
\begin{tabular}{ccc}
\cline{3-3}           &               &    \mmm  \\ 
\cline{1-2}  \mmm     &   \mm{1}      &    \mmm  \\ 
\cline{2-2}   \mm{1}  &    \mmm       &    \mm{2}\\ 
\cline{1-1}    \mmm   &     \mm{2}    &    \mmm  \\ 
\cline{2-3}   \mm{2}  &               &          \\ 
\cline{1-1}   \\
\end{tabular}&\\
& (a) & & & (b) & & & (c) & \\\hline
\end{tabular}

\caption{\label{fig:benign} The table (a) is not benign, (b) is benign but is not a reverse plane partition, (c) is a reverse plane partition.}
\end{figure}

The descent-resolution algorithm takes a benign tableau $T'$ and converts it into a reverse plane partition of the same shape and weight. 

A benign tableau $T'$ may easily fail to be a reverse plane partition. More specifically, it may contain an $i+1$ with an $i$ directly to the right of it -- we call such a situation \textit{a descent}. Let $A$ be the column containing an $i+1$ and $A+1$ be the column containing an $i$. Then there are three possible types of descents depending on the types of the columns $A$ and $A+1$ (their abbreviations are relevant when $i=1$):
\begin{enumerate}
 \item[(2M)] $A$ is $i+1$-pure and $A+1$ is mixed
 \item[(M1)] $A$ is mixed and $A+1$ is $i$-pure
 \item[(21)] $A$ is $i+1$-pure and $A+1$ is $i$-pure
\end{enumerate}
There is a fourth type of descents in which both columns are mixed, but the benign tableau property implies that such descents are impossible. For a descent of each of these three types, \cite{GGL2015} provides a \textit{descent-resolution step}, which changes only the entries of $A$ and $A+1$ and resolves this descent. 

For descents of the first two types, the descent-resolution step switches the roles of the columns but preserves the vertical coordinate of the lowest $i$ in the mixed column; this determines the operation uniquely. For a descent of the third type, it simply replaces all $i$'s by $i+1$'s and vice versa in both columns. It is clear that the resulting tableau will also be a benign tableau. The descent-resolution steps for $i=1$ are visualized in Figure \ref{fig:reduction}.

\def\one{{\mathbf{1}}}
\def\two{{\mathbf{2}}}


\begin{figure}[here]
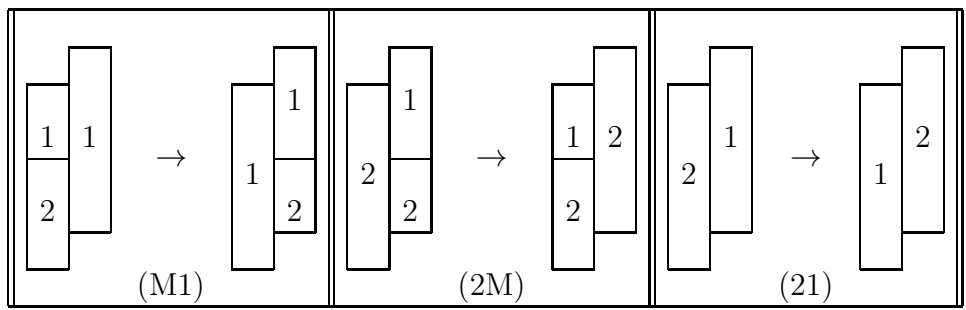

 \centering
\begin{tabular}{||ccc||ccc||ccc||}\hline
 & & & & & & & & \\
\begin{tabular}{cc}
\cline{2-2} & \multicolumn{1}{|c|}{}\\
\cline{1-1} \multicolumn{1}{|c|}{} & \multicolumn{1}{|c|}{} \\
\multicolumn{1}{|c|}{1} & \multicolumn{1}{|c|}{1}\\
\cline{1-1} \multicolumn{1}{|c|}{} & \multicolumn{1}{|c|}{}\\
\multicolumn{1}{|c|}{2} & \multicolumn{1}{|c|}{}\\
\cline{2-2} \multicolumn{1}{|c|}{} \\
\cline{1-1}
\end{tabular}
&
$\rightarrow$
&
\begin{tabular}{cc}
\cline{2-2} & \multicolumn{1}{|c|}{}\\
\cline{1-1} \multicolumn{1}{|c|}{} & \multicolumn{1}{|c|}{1} \\
\multicolumn{1}{|c|}{} & \multicolumn{1}{|c|}{}\\
\cline{2-2} \multicolumn{1}{|c|}{1} & \multicolumn{1}{|c|}{}\\
\multicolumn{1}{|c|}{} & \multicolumn{1}{|c|}{2}\\
\cline{2-2} \multicolumn{1}{|c|}{} \\
\cline{1-1}
\end{tabular}
&
\begin{tabular}{cc}
\cline{2-2} & \multicolumn{1}{|c|}{}\\
\cline{1-1} \multicolumn{1}{|c|}{} & \multicolumn{1}{|c|}{1} \\
\multicolumn{1}{|c|}{} & \multicolumn{1}{|c|}{}\\
\cline{2-2} \multicolumn{1}{|c|}{2} & \multicolumn{1}{|c|}{}\\
\multicolumn{1}{|c|}{} & \multicolumn{1}{|c|}{2}\\
\cline{2-2} \multicolumn{1}{|c|}{} \\
\cline{1-1}
\end{tabular}
&
$\rightarrow$
&
\begin{tabular}{cc}
\cline{2-2} & \multicolumn{1}{|c|}{}\\
\cline{1-1} \multicolumn{1}{|c|}{} & \multicolumn{1}{|c|}{} \\
\multicolumn{1}{|c|}{1} & \multicolumn{1}{|c|}{2}\\
\cline{1-1} \multicolumn{1}{|c|}{} & \multicolumn{1}{|c|}{}\\
\multicolumn{1}{|c|}{2} & \multicolumn{1}{|c|}{}\\
\cline{2-2} \multicolumn{1}{|c|}{} \\
\cline{1-1}
\end{tabular}
&
\begin{tabular}{cc}
\cline{2-2} & \multicolumn{1}{|c|}{}\\
\cline{1-1} \multicolumn{1}{|c|}{} & \multicolumn{1}{|c|}{} \\
\multicolumn{1}{|c|}{} & \multicolumn{1}{|c|}{1}\\
\multicolumn{1}{|c|}{2} & \multicolumn{1}{|c|}{}\\
\multicolumn{1}{|c|}{} & \multicolumn{1}{|c|}{}\\
\cline{2-2} \multicolumn{1}{|c|}{} \\
\cline{1-1}
\end{tabular}
&
$\rightarrow$
&
\begin{tabular}{cc}
\cline{2-2} & \multicolumn{1}{|c|}{}\\
\cline{1-1} \multicolumn{1}{|c|}{} & \multicolumn{1}{|c|}{} \\
\multicolumn{1}{|c|}{} & \multicolumn{1}{|c|}{2}\\
\multicolumn{1}{|c|}{1} & \multicolumn{1}{|c|}{}\\
\multicolumn{1}{|c|}{} & \multicolumn{1}{|c|}{}\\
\cline{2-2} \multicolumn{1}{|c|}{} \\
\cline{1-1}
\end{tabular}\\
& (M1) & & & (2M) & & & (21) & \\\hline
\end{tabular}

\caption{\label{fig:reduction} The descent-resolution steps (taken from \cite{GGL2015}).}
\end{figure}

The descent-resolution algorithm performs these descent-resolution steps until there are no descents left, which means that we get a reverse plane partition. This algorithm terminates, because $i$-pure columns always move to the right while $i+1$-pure columns always move to the left. Also, it is shown in \cite{GGL2015} that the result of the algorithm does not depend on the order in which the descents are resolved.

\subsubsection{The definition of $e_i$'s and $f_i$'s}
Let $\lm$ be a skew shape, and fix $i\in[m-1]$. For a reverse plane partition $T$ of shape $\lm$ with entries in $[m]$, define $e_i(T)$ as follows. First, consider only the subtableau of $T$ that consists of entries equal to either $i$ or $i+1$.  Then, label each $i$-pure column by a closing parenthesis and each $i+1$-pure column by an opening parenthesis (and ignore the mixed columns). 

Choose the ($i+1$-pure) column $A$ that corresponds to the leftmost unmatched opening parenthesis (if all opening parentheses are matched, set $e_i(T):=0$). Replace all the $i+1$'s in $A$ by $i$'s, and then apply the descent-resolution algorithm to the resulting benign tableau. 

Similarly, $f_i$ chooses the ($i$-pure) column $B$ that corresponds to the rightmost unmatched closing parenthesis and replaces all the $i$'s in it by $i+1$'s and then applies the descent-resolution algorithm.

We discuss the properties of $e_i$'s and $f_i$'s and their connection to the defined above reading word in the next section.

\section{Properties of the reading words of reverse plane partitions}
Recall that the reading word $r(T)$ of a reverse plane partition $T$ of shape $\lm$ is defined as the usual left-to-right bottom-to-top Young tableaux reading word that ignores each entry that has the same entry below it. An example is shown in Figure \ref{fig:rw}.

\begin{figure}[h]
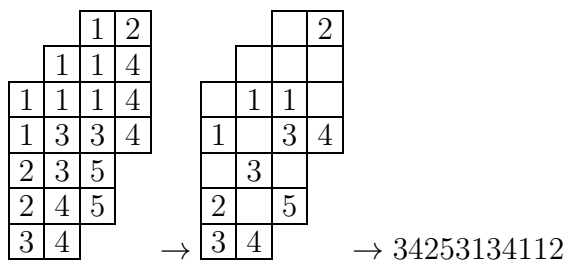

$$\young(::12,:114,1114,1334,235:,245,34)\to \young(::\ 2,:\ \ \ ,\ 11\ ,1\ 34,\ 3\ :,2\ 5,34)\to 34253134112$$
\caption{\label{fig:rw} The reading word of a skew-shaped reverse plane partition.}
\end{figure}

We assume the coordinates of the boxes are in the matrix notation. For a reverse plane partition $T$, define its \textit{height vector} $h(T)$ to be the sequence of vertical coordinates of the entries of $T$ that contribute to $r(T)$ arranged in the exact same order as they appear in the reading word. For example, for $T$ as in Figure \ref{fig:rw} we put $h(T):=(7,7,6,6,5,4,4,4,3,3,1)$. It is always a weakly decreasing sequence of positive integers. Similarly, we define the height vector of a benign tableau. Note that each descent-resolution step preserves the height vector, and, therefore, so do the operators $e_i$ and $f_i$.

\begin{lemma}
 \label{lemma:injective}
 Fix a skew shape $\lm$ and a sequence $h$ of positive integers. Then for each reading word $r$ there is at most one reverse plane partition $T$ of shape $\lm$ with $r(T)=r$ and $h(T)=h$.
\end{lemma}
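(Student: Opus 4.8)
The plan is to reconstruct the reverse plane partition $T$ from the data $(r, h, \lm)$ and show that at each step the reconstruction is forced. The reading word $r$ and the height vector $h$ together tell us, for each letter in the reading word, both its value and the row in which it sits. What is \emph{not} directly recorded is the horizontal position of each entry, nor the entries that were suppressed because they equal the entry directly below them. So the task is to show that these omitted data are uniquely determined.

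\textbf{Step 1: recover the contributing entries row by row.} Since $h$ is weakly decreasing and $r$ is read in bottom-to-top, left-to-right order, the pair $(r,h)$ partitions the reading word into maximal blocks of constant height. The $k$-th block gives the left-to-right sequence of values of all entries in row $k$ that contribute to $r(T)$, namely those entries that are strictly less than the entry directly below them (or have no box below). First I would argue that within a fixed row the columns occupied by these contributing entries are forced: because $T$ weakly increases along rows, and the shape $\lm$ fixes which boxes exist in that row, the contributing entries must occupy a contiguous terminal segment of the row's boxes in a way pinned down by the requirement that the non-contributing entries are exactly those repeating the value below them.

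\textbf{Step 2: fill in the suppressed entries by downward propagation.} The entries omitted from $r(T)$ are precisely those equal to the entry directly below. Hence if a box $(k,j)$ lies below a contributing box $(k-1,j)$ and does not itself contribute, its value must equal the value of $(k,j)$'s neighbor below, and so on; more usefully, a suppressed entry at $(k,j)$ equals the entry at $(k+1,j)$. I would process rows from bottom to top: the bottom row has no suppressed entries (every box there either contributes or lies at the very bottom of its column and so contributes), and once a lower row is fully determined, each suppressed box in the row above is forced to copy the value immediately below it, while each contributing box in that row already has its value and column fixed by Step 1. The column-weak-increase condition then has to be checked for consistency, but it cannot introduce ambiguity.

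\textbf{The main obstacle} I anticipate is making Step 1 rigorous: showing that the horizontal placement of the contributing entries within a row is genuinely forced, rather than merely consistent with several fillings. The delicate point is that "contributing" is defined relative to the row below, so the set of contributing boxes in row $k$ depends on data from row $k+1$ that is only determined later in the bottom-to-top induction. The clean way to handle this is to set up the induction so that the full content of rows $k+1,\dots$ is known before placing row $k$: given the determined lower rows, a box $(k,j)$ contributes if and only if it is not directly above an equal entry, and reading off the correct contiguous set of occupied positions from the known block of $r$ becomes a matter of matching the recorded values against the forced column constraints. I expect the proof to conclude by a short induction on the number of rows, with the base case being the top (or bottom) row where there is nothing to propagate, and uniqueness at each stage following from the strict monotonicity that distinguishes contributing from suppressed boxes.
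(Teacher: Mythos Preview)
Your overall plan---process rows from bottom to top, copy suppressed boxes from the box below, and place the contributing letters of $r$ into the remaining boxes of each row---is exactly the scheme the paper uses. The gap is in Step~1: the claim that the contributing entries in a fixed row occupy a \emph{contiguous terminal segment} is false. For instance, take the two-row reverse plane partition with row $k$ equal to $(1,2,2)$ sitting above row $k+1$ equal to $(2,2,3)$. Then columns $1$ and $3$ of row $k$ contribute (since $1<2$ and $2<3$) while column $2$ is suppressed ($2=2$). The contributing positions are not contiguous, and your hedge in the ``main obstacle'' paragraph still refers to a ``contiguous set of occupied positions,'' so the actual placement argument is missing.

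What the paper does instead is go box by box (left to right) within row $k$ and use \emph{both} neighbors: if $c$ is the value just filled to the left and $a$ is the (already determined) value directly below, then the current reading-word letter $r_j$ goes into this box precisely when $c\le r_j<a$. If $r_j\ge a$ the box cannot hold $r_j$ (it would not be strictly below $a$), so it must be suppressed and gets value $a$; but if $c\le r_j<a$ and you nonetheless suppressed the box, its value $a$ would exceed $r_j$, and row-monotonicity would then forbid placing $r_j$ anywhere further right. This two-sided greedy criterion is the missing idea that forces the horizontal placement; once you add it, your induction goes through and coincides with the paper's proof.
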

\begin{proof}
 Suppose that there exists a reverse plane partition $T$ of shape $\lm$ with $r(T)=r$ and $h(T)=h$. Then $T$ can be uniquely reconstructed from $r$ and $h$ by filling the boxes of $\lm$ in the reading order:
 \begin{enumerate}
  \item Set $j=1$;
  \item Let $B$ be the first (in the reading order) box of $\lm$ which is not filled with a number. Let $a$ be the value in the box directly below it, and let $c$ be the value in the box directly to the left of it (if there is no such box then we put $a:=+\infty$ or $c:=0$);
  \item If the height of $B$ is not equal to $h_j$, then set the entry in the box $B$ equal to $a$ and proceed to the next box (go to step 2);
  \item If the number $r_j$ does not satisfy $c\leq r_j<a$, then, again, set the entry in the box $B$ equal to $a$ and proceed to the next box;
  \item Otherwise, we set the entry in the box $B$ equal to $r_j$, increase $j$ by $1$ and proceed to the next box.
 \end{enumerate}
 Note that if $r$ and $h$ are the reading word and the height vector of some reverse plane partition, then the entries of $h$ weakly decrease, and the entries of $r$ that have the same height weakly increase. We prove by induction that the first $k$ entries of $T$ (in the reading order) are the same as the first $k$ entries of the reverse plane partition that the algorithm produces. For $k=0$ it is true. Now, we want to put $r_j$ somewhere into the row $h_j$ so that the entry below it is strictly bigger than $r_j$ and so that the entries in the row weakly increase. Thus if $r_j$ cannot be put into the current box (because either $r_j\geq a$ or $r_j<b$), then this box should be ignored by the reading word, so its value should be the same as the value directly below it. If $b\leq r_j<a$, then $r_j$ has to be put into the current box, because if we put $r_j$ somewhere to the right, then we have to fill this box with the value directly below it (with $a$), which is strictly bigger than $r_j$, so the entries in the row will not be weakly increasing.
\end{proof}

Recall that the operators $F_i$ are defined on the set $\S=[m]^n$ of all strings of length $n$, and replace the rightmost unmatched closing parenthesis (corresponding to an entry equal to $i$) by an opening parenthesis (by an $i+1$). Meanwhile, the operators $f_i$ act on $\R(\lm)$, which is the set of all reverse plane partitions of shape $\lm$ with entries less than or equal to $m$. It turns out that these two actions commute with the operation of taking the reading word:

\begin{lemma}
 \label{lemma:intertw}
 Let $T$ be a reverse plane partition. Then 
 $$F_i(r(T))=r(f_i(T)).$$
 In particular, if $f_i(T)$ is zero then $F_i(r(T))$ is zero and the converse is also true.
\end{lemma}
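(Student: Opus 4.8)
The plan is to reduce the statement about reverse plane partitions to the already-understood combinatorics of parenthesis matching on words. The operator $f_i$ acts on the reverse plane partition $T$ by matching $i$-pure columns (closing parentheses) against $i+1$-pure columns (opening parentheses), flipping the $i$'s in one selected $i$-pure column to $i+1$'s, and then running the descent-resolution algorithm. The operator $F_i$ acts on the word $r(T)$ by the usual parenthesation, flipping one $i$ to $i+1$. To compare them, I would first show that the \emph{reading word} sees columns in essentially the same way that the column-parenthesation does: an $i$-pure column contributes (among its $\{i,i+1\}$-entries) exactly one symbol $i$ to $r(T)$, an $i+1$-pure column contributes exactly one symbol $i+1$, and a mixed column contributes the pattern that will get matched and cancelled internally. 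This is the content of the ``ignore each entry equal to the entry below it'' rule: within a single column all but the topmost occurrence of each value are suppressed, and a mixed column yields an adjacent pair $\dots i\,(i{+}1)\dots$ reading bottom-to-top, i.e. a matched $()$ pair in the parenthesation.

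\textbf{Key steps, in order.} First I would establish the \emph{column-to-letter dictionary}: restricting to entries in $\{i,i+1\}$, I claim that up to the matched pairs coming from mixed columns, the subword of $r(T)$ in the alphabet $\{i,i+1\}$ records precisely one closing parenthesis per $i$-pure column and one opening parenthesis per $i+1$-pure column, and moreover their left-to-right order in $r(T)$ agrees with the left-to-right order of the columns. The benign/RPP column-monotonicity is what guarantees this order is unambiguous. Second, I would show that mixed columns produce matched $()$ pairs that are immediately cancelled in step~(2) of the $E_i/F_i$ algorithm, so they are invisible to the matching and hence the set of unmatched closing parentheses of $r(T)$ is in order-preserving bijection with the unmatched $i$-pure columns. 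Consequently the $i$-pure column $B$ selected by $f_i$ (rightmost unmatched closing parenthesis among columns) corresponds exactly to the letter selected by $F_i$ (rightmost unmatched closing parenthesis in the word). Third, since the descent-resolution algorithm preserves the height vector (noted in the text just before the lemma) and preserves the weight, I would argue that flipping column $B$ and resolving descents changes $r(T)$ in exactly the one position where $F_i$ changes $r(T)$: the flipped column still contributes a single letter in the same reading-order slot, now equal to $i+1$ instead of $i$, while every descent-resolution step leaves the reading word unchanged. For the last clause, $f_i(T)=0$ iff every $i$-pure column is matched iff every closing parenthesis in $r(T)$ is matched iff $F_i(r(T))=0$.

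\textbf{The main obstacle} I expect is verifying that the descent-resolution steps genuinely leave the reading word $r(\cdot)$ invariant, not merely the height vector. Each step (M1), (2M), (21) rearranges the $i$'s and $i+1$'s in two adjacent columns, potentially changing which entries are ``equal to the entry below'' and thus which entries survive into the reading word; I must check case by case (using Figure~\ref{fig:reduction}) that in each step the multiset of surviving letters, \emph{together with their reading-order positions as recorded by the height vector}, is unchanged. Here Lemma~\ref{lemma:injective} is the right tool: since $r$ together with $h$ determines the RPP uniquely, it suffices to show that $f_i(T)$ has the claimed reading word $F_i(r(T))$ and the same height vector $h(T)$ as $T$, and then invoke injectivity to conclude the tableau is correct. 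So concretely I would prove the two invariants $h(f_i(T))=h(T)$ (already given) and $r(f_i(T))=F_i(r(T))$ by tracking a single column flip through descent resolution, using the benign-tableau ordering to control the matching, and letting Lemma~\ref{lemma:injective} absorb the bookkeeping that would otherwise make the case analysis painful.
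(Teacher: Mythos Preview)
Your overall architecture matches the paper's: split into (a) the matching on columns versus the matching on letters of $r(T)$ agree, so $f_i$ and $F_i$ select ``the same'' object, and (b) the descent-resolution that follows the column flip does not disturb the reading word. But there is a genuine gap in your plan for part (b), and a smaller one in part (a).

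\textbf{The main gap.} You propose to check ``case by case (using Figure~\ref{fig:reduction}) that in each step the multiset of surviving letters, together with their reading-order positions \dots, is unchanged.'' This will fail: it is \emph{not} true that every descent-resolution step preserves the reading word. The paper even flags this in the Remark inside the proof: both halves of the argument are \emph{false} for $e_i$ and $E_i$, which use the very same descent-resolution steps. So a uniform case check over (2M), (M1), (21) cannot succeed. What actually makes part (b) work is a structural fact about $f_i$ that you have not used: because $f_i$ flips the \emph{rightmost unmatched} $i$-pure column $A$, every column strictly between $A$ and the next $i{+}1$-pure column to its right is mixed (otherwise $A$ would not be rightmost unmatched). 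Hence, after the flip, the only descents that ever appear involve the new $i{+}1$-pure column and a mixed neighbour, i.e.\ a single type of step, and \emph{that} step one checks does preserve $r(\cdot)$. Your plan needs this ``only one type of descent occurs'' argument; without it, the case analysis breaks.

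\textbf{A secondary issue.} Your column-to-letter dictionary asserts that a mixed column contributes an \emph{adjacent} pair ``$\dots i\,(i{+}1)\dots$'' that is ``immediately cancelled.'' Two corrections: first, the surviving letters from a mixed column are its lowest $i$ and its lowest $i{+}1$, which live in different rows and hence appear in the reading order as $(i{+}1)$ \emph{before} $i$, not the other way; second, they are typically \emph{not} adjacent in $r(T)$, since many letters from other columns in the intervening rows sit between them. So ``mixed columns cancel immediately'' is too strong. The paper proves the agreement of the two matchings by three monotone implications (unmatched $i$-pure $\Rightarrow$ unmatched letter; mixed column's $i$ is matched; matched $i$-pure $\Rightarrow$ matched letter) using interval-restriction and counting, rather than by asserting adjacency. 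Your use of Lemma~\ref{lemma:injective} at the end is also misplaced: that lemma lets you recover $T$ from $(r(T),h(T))$, but here you are trying to compute $r(f_i(T))$, not to identify an RPP from a word; it does not shortcut the descent-resolution analysis.
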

And, because $e_i$ and $f_i$ are ``inverse to each other'' (in the same sense as above), and the same is true for $E_i$ and $F_i$, we get
\begin{corollary}
 Let $T$ be a reverse plane partition. Then 
 $$E_i(r(T))=r(e_i(T)).$$
\end{corollary}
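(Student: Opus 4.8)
The plan is to deduce this Corollary formally from Lemma~\ref{lemma:intertw} together with the two ``inverse'' relations: on strings, $E_i(u)=v\neq 0$ if and only if $F_i(v)=u$; and on reverse plane partitions, $e_i(T)=S\neq 0$ if and only if $f_i(S)=T$. Throughout I adopt the conventions $r(0):=0$, $E_i(0)=F_i(0):=0$, $e_i(0)=f_i(0):=0$, and I use that $r(S)=0$ exactly when $S=0$. The argument splits according to whether $e_i(T)$ vanishes.

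First the generic case. Suppose $e_i(T)\neq 0$ and set $S:=e_i(T)$. By the inverse relation for $e_i,f_i$ we then have $f_i(S)=T$, so applying Lemma~\ref{lemma:intertw} to the reverse plane partition $S$ gives $F_i(r(S))=r(f_i(S))=r(T)$. Since $r(S)\neq 0$, the inverse relation for $E_i,F_i$ now yields $E_i(r(T))=r(S)=r(e_i(T))$, as desired. This step is purely formal and uses nothing beyond the stated intertwining relation and the two bijections between $i$-arrows.

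The remaining, and genuinely delicate, case is $e_i(T)=0$, where I must show $E_i(r(T))=0$. Here pure duality is not enough: knowing $E_i(r(T))=v\neq 0$ only produces a string $v$ with $F_i(v)=r(T)$, and a priori $v$ need not be the reading word of any reverse plane partition, so one cannot immediately contradict $T\notin\operatorname{Im}(f_i)$. I would instead match the two relevant crystal statistics. Iterating Lemma~\ref{lemma:intertw} (its displayed identity together with its ``in particular'' clause) gives $F_i^{\,k}(r(T))=r(f_i^{\,k}(T))$ for all $k\geq 0$, and hence shows that the number of admissible $F_i$-steps from $r(T)$ equals the number of admissible $f_i$-steps from $T$; call this common value $\varphi_i(T)$. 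On the other hand, the number of admissible $e_i$-steps $\varepsilon_i(T)$ and of admissible $E_i$-steps are governed by the elementary identities $\varphi_i(T)-\varepsilon_i(T)=T(i)-T(i+1)$ on the tableau side (unmatched closing minus unmatched opening equals the number of $i$-pure columns minus the number of $(i+1)$-pure columns) and the analogous identity on the string side, whose right-hand side is $w_i(r(T))-w_{i+1}(r(T))$. The two right-hand sides agree because taking the reading word preserves the weight, $w_j(r(T))=T(j)$: an entry survives into $r(T)$ precisely when it is the bottom of a maximal vertical run of equal values, and each value $j$ contributes exactly one such run per column in which it occurs. Combining these observations forces the $\varepsilon$-statistics to coincide, and in particular $e_i(T)=0$ if and only if $E_i(r(T))=0$.

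I expect the main obstacle to be exactly this zero case---more precisely, the fact that it cannot be settled by duality alone without first knowing that the image of $r$ is stable under $E_i$ and $F_i$. The statistic-matching route above circumvents this by extracting the needed information from Lemma~\ref{lemma:intertw} (for $\varphi_i$) and from the weight-preservation of $r$ (for the difference $\varphi_i-\varepsilon_i$); the generic case and the bookkeeping with the $0$ conventions are then routine. An alternative would be to prove directly that $E_i,F_i$ carry reading words of shape-$\lm$ reverse plane partitions to reading words (using Lemma~\ref{lemma:injective} and the height-vector invariance of the $e_i,f_i$) and then close the duality loop, but the statistic comparison seems cleaner.
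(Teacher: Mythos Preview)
Your argument is correct. The generic case ($e_i(T)\neq 0$) is exactly the duality argument the paper has in mind: the paper's entire proof of this Corollary is the single sentence ``because $e_i$ and $f_i$ are inverse to each other, and the same is true for $E_i$ and $F_i$, we get the Corollary,'' and your Case~1 spells this out verbatim.

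Where you go beyond the paper is the zero case. You are right that pure duality does not settle $e_i(T)=0\Rightarrow E_i(r(T))=0$: from $E_i(r(T))=v\neq 0$ one only gets a string $v$ with $F_i(v)=r(T)$, and nothing yet says $v$ lies in the image of $r$. The paper simply asserts the Corollary and does not isolate this issue. Your fix via the statistics $\varphi_i$ and $\varepsilon_i$ is clean and correct: iterating Lemma~\ref{lemma:intertw} matches the $\varphi$-values, the weight-preservation $w_j(r(T))=T(j)$ (one surviving $j$ per column containing $j$) matches the differences $\varphi_i-\varepsilon_i$, and hence the $\varepsilon$-values agree. This is a genuine, if small, strengthening of the paper's presentation; the alternative you mention (showing the image of $r$ is $E_i,F_i$-stable) is what the paper effectively uses \emph{after} the Corollary, so your route avoids a minor circularity.
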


\begin{proof}[{Proof of Lemma \ref{lemma:intertw}}]
 The operator $f_i$ labels $i$-pure columns by closing parentheses and $i+1$-pure columns by opening parentheses. Then it finds the rightmost unmatched closing parenthesis and replaces the corresponding $i$-pure column by an $i+1$-pure column. After that we get a benign tableau $T'$, and then we apply the descent-resolution algorithm to $T'$ which produces a reverse plane partition $T''=:f_i(T)$. Our proof consists of two parts: 
 \begin{enumerate}
  \item $r(T')=r(T'')$;
  \item $F_i(r(T))=r(T')$.
 \end{enumerate}
\begin{remark}
 Note that both of these parts are false for $e_i$ and $E_i$. To make them true, one needs to introduce the reading word that ignores each entry equal to the entry directly \textit{above} it, rather than directly below it. 
\end{remark}
 We start with the first part. Note that even though $T'$ and $T''$ differ by a sequence of descent-resolution steps, it is not true in general that the descent-resolution steps preserve the reading word. Fortunately, as we will see later, all the appearing descents are of the first type. And the corresponding descent-resolution step (see Figure \ref{fig:reduction1}) clearly does not change the reading word.

\begin{figure}[here]
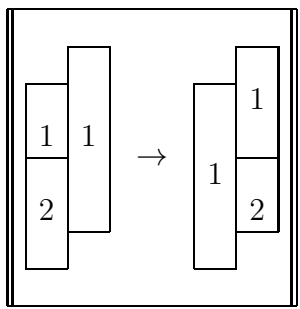

 \centering
\begin{tabular}{||ccc||}\hline
 & &  \\
\begin{tabular}{cc}
\cline{2-2} & \multicolumn{1}{|c|}{}\\
\cline{1-1} \multicolumn{1}{|c|}{} & \multicolumn{1}{|c|}{} \\
\multicolumn{1}{|c|}{1} & \multicolumn{1}{|c|}{1}\\
\cline{1-1} \multicolumn{1}{|c|}{} & \multicolumn{1}{|c|}{}\\
\multicolumn{1}{|c|}{2} & \multicolumn{1}{|c|}{}\\
\cline{2-2} \multicolumn{1}{|c|}{} \\
\cline{1-1}
\end{tabular}
&
$\rightarrow$
&
\begin{tabular}{cc}
\cline{2-2} & \multicolumn{1}{|c|}{}\\
\cline{1-1} \multicolumn{1}{|c|}{} & \multicolumn{1}{|c|}{1} \\
\multicolumn{1}{|c|}{} & \multicolumn{1}{|c|}{}\\
\cline{2-2} \multicolumn{1}{|c|}{1} & \multicolumn{1}{|c|}{}\\
\multicolumn{1}{|c|}{} & \multicolumn{1}{|c|}{2}\\
\cline{2-2} \multicolumn{1}{|c|}{} \\
\cline{1-1}
\end{tabular}\\ & & \\\hline
\end{tabular}
\caption{\label{fig:reduction1} The first descent-resolution step (M1).}
\end{figure}


 The reason we only need this descent-resolution step is the definition of $f_i$. Namely, $f_i$ changes only one $i$-pure column $A$ into an $i+1$-pure column. And this column is required to be labeled by the rightmost unmatched closing parenthesis. Let $B$ be the leftmost $i+1$-pure column to the right of $A$, and let $C$ be the rightmost $i$-pure column to the left of $A$. If there was an $i$-pure column between $A$ and $B$, then it would also be unmatched, so $A$ would not be labeled by the rightmost unmatched closing parenthesis. Also, if there was an $i+1$-pure column $D$ between $C$ and $A$, then it would have to be matched to some $i$-pure column between $D$ and $A$, so $C$ would not be the rightmost $i$-pure column to the left of $A$. All in all we can see that all the columns between $C$ and $A$ and between $A$ and $B$ are mixed. If either $C$ or $B$ is undefined, then all the columns to the left (resp., to the right) of $A$ are mixed.
 
 Now it is clear why only the descents of the first type appear while the descent-resolution steps are performed. The column $A$ becomes $i+1$-pure, so the only possible descent can occur between $A$ and $A+1$, and as we resolve it, the $i+1$-pure column moves to the right. But because it is surrounded by mixed columns, the only appearing descents are between this $i+1$-pure column and the mixed column to the right of it. And if this $i+1$-pure column moves to the position $B-1$, then there are no descents left, because $B$ is also $i+1$-pure. This finishes the proof of the first part.
 
 The second part asks for a certain correspondence between two different matchings. The first one appears when we label $i$-pure columns by closing parentheses, $i+1$-pure columns by opening parentheses, and then say that two pure columns match each other if their labels (two parentheses) match each other in the parenthesis sequence. In this situation we say that these two columns \textit{match in the reverse plane partition}. The second matching appears when we label the entries of the reading word by parentheses and say that two entries of the reading word match each other if their labels match each other. In this situation we say that these two entries \textit{match in the reading word}. 
 
 The second part of the Lemma states that an $i$-pure column is labeled by the rightmost unmatched closing parenthesis in the reverse plane partition 
 if and only if the corresponding entry in the reading word is also labeled by the rightmost unmatched closing parenthesis in the reading word. Here we can restrict our attention to reverse plane partitions that are filled only with $i$'s and $i+1$'s. For a column $A$, let $j(A)$ be the position of the corresponding entry of the reading word if $A$ is either $i$- or $i+1$-pure. If $A$ is mixed, then set $j^-(A)$ to be the position of the entry of the reading word corresponding to $i$ and set $j^+(A)$ to be the position of the entry of the reading word corresponding to $i+1$. 
 
 We need to check three implications:
 \begin{enumerate}
  \item If a column $A$ is $i$-pure and unmatched in the reverse plane partition, then the entry $j(A)$ is unmatched in the reading word.
  \item If a column $A$ is mixed, then the entry $j^-(A)$ is matched to something (not necessarily to $j^+(A)$) in the reading word.
  \item If a column $A$ is $i$-pure and matched to some $i+1$-pure column $B$ in the reverse plane partition, then the entry $j(A)$ is also matched to something (not necessarily to $j(B)$) in the reading word.
 \end{enumerate}
 
 It is clear that these three properties together imply that the $i$-pure columns unmatched in the reverse plane partition correspond exactly to the unmatched $i$'s in the reading word. And because the reading word preserves the order of pure columns, the second part of the lemma reduces to proving these three implications.
 
 Note that if a column $A$ is $i$-pure, then for every other column $B$ that is to the right (resp., left) of $A$, the entry $j(B)$ or $j^-(B)$ or $j^+(B)$ if defined is also to the right (resp. left) of $j(A)$. Another simple useful observation is that if we have any injective map that attaches to each $i+1$ in the reading word an $i$ to the right of it, then all the $i+1$'s in this reading word are matched. Now we are ready to check the implications (1)-(3).
 
 (1) If a column $A$ is $i$-pure and unmatched, then we can just throw everything to the right of $A$ and $j(A)$ out. Now, every $i+1$-pure column to the left of $A$ is matched to something in the reverse plane partition, so for every $i+1$ to the left of $j(A)$ in the reading word we have an $i$ that is between it and $j(A)$, and for different $i+1$'s these $i$'s are also different. Therefore every $i+1$ to the left of $j(A)$ is matched in the reading word as well, so $j(A)$ is unmatched in the reading word.
 
 (2) Suppose $A$ is mixed. If we throw out all the columns that are to the right of $A$, then several $i+1$'s between $j^+(A)$ and $j^-(A)$ will be thrown out of the reading word, but all the $i$'s to the left of $j^-(A)$ will remain untouched. Let $B$ be the rightmost $i$-pure column to the left of $A$. Now we throw out all the columns to the left of $B$ and also $B$ itself, which corresponds to erasing the part the reading word from the beginning to $j(B)$ (if there was no such $B$ then we do not throw anything out of the reading word). Now we have a reverse plane partition that contains no $i$-pure columns, so by the counting argument $j^-(A)$ is matched in the reading word. But then it was also matched in the original reading word.
 
 (3) Suppose $A$ is $i$-pure and is matched in the reverse plane partition to some $i+1$-pure column $B$ to the left of $A$. Let $C$ be the rightmost $i$-pure column to the left of $B$. We throw out everything that is to the right of $A$ or to the left of $C$, which corresponds to keeping all the entries of the reading word between $j(C)$ and $j(A)$. We also remove $C$ and $j(C)$. All the $i$-pure columns between $A$ and $B$ are matched in the reverse plane partition to some $i+1$-pure columns between $A$ and $B$, and there are no $i$-pure columns between $B$ and $C$, so the number of $i+1$'s between $j(C)$ and $j(A)$ is strictly bigger than the number of $i$'s between $j(C)$ and $j(A)$, so $j(A)$ has to be matched to something in the reading word. We finish the proof of the third implication, which finishes the proof of the second (last) part of the Lemma.
\end{proof}

Let $\lm$ be a skew shape, and let $h$ be a sequence of positive integers. Lemmas \ref{lemma:injective} and \ref{lemma:intertw} give a vertex-injective map from the graph of all reverse plane partitions $T$ of shape $\lm$ with $h(T)=h$ to the graph $\S$ of all strings of the same length as $h$, and this map takes the operators $e_i$ and $f_i$ to $E_i$ and $F_i$. Therefore each connected component of the graph of all reverse plane partitions is isomorphic to the corresponding connected component of the graph $\S$. Now the proof of Theorem \ref{thm:crystal} follows from the observations about crystal graphs made in Subsection \ref{subsubsection:crystal}, in particular, the proof of Corollary \ref{cor:LR} follows from Lemma \ref{lemma:crystal}. \qed

\section*{Acknowledgments}
I am grateful to Prof. Alex Postnikov and to Darij Grinberg for their valuable remarks.

\bibliography{rpp}
\bibliographystyle{plain}

\end{document}